\newcommand{\ka}{{\mathfrak a}}
\newcommand{\f}{{\mathfrak f}}
\newcommand{\g}{{\mathfrak g}}
\newcommand{\h}{{\mathfrak h}}
\newcommand{\s}{{\mathfrak s}}
\newcommand{\n}{{\mathfrak n}}
\newcommand{\mk}{{\mathfrak k}}
\newcommand{\ma}{{\mathfrak a}}
\newcommand{\spl}{{\mathfrak sl}}
\newcommand{\spu}{{\mathfrak su}}
\newcommand{\q}{{\mathfrak q}}
\newcommand{\ml}{{\mathfrak l}}
\newcommand{\C}{{\bf C}}
\newcommand{\R}{{\bf R}}
\newcommand{\ad}{\rm ad}
\theoremstyle{plain} 
\newtheorem{Th}{\indent\bf Theorem}[section]
\newtheorem{Lm}{\indent\bf Lemma}[section]
\newtheorem{Ps}{\indent\bf Proposition}[section]
\theoremstyle{definition}
\newtheorem{Rm}{\indent\bf Remark}
\newtheorem{Ex}{\indent\bf Example}
\title{
\huge Homogeneous Sasaki and Vaisman manifolds of unimodular Lie groups
}
\author{
\textsc{D. Alekseevsky, 
K. Hasegawa and Y. Kamishima}
}
\date{}
\begin{document}

\maketitle


\begin{abstract}
A Vaisman manifold is a special kind of locally conformally K\"ahler manifold, which is closely related
to a Sasaki manifold. In this paper we show a basic structure theorem of simply connected homogeneous Sasaki 
and Vaisman manifods of unimodular Lie groups, up to holomorphic isometry. For the case of unimodular Lie groups,
we obtain a complete classification of  simply connected Sasaki and Vaisman unimodular Lie groups, 
up to modification. 

\end{abstract}


\section*{Introduction} 

We recall that a {\em locally conformally K\"ahler manifold}, or shortly an {\em l.c.K. manifold},
is an Hermitian manifold $(M, g, J)$,
where $g$ is an Hermitian metric with complex structure $J$ whose
associated fundamental $2$-form $\Omega$ satisfies the condition
$$d \Omega  = \Omega \wedge \theta \eqno(*)$$
for a closed $1$-form $\theta$, called the {\em Lee form}.
We may also define it as a {\em locally conformally symplectic manifold} with compatible complex structure
$(M, \Omega, J)$, where $\Omega$ is a non-degenerate $2$-form which satisfies $(*)$
for a closed $1$-form $\theta$ and $J$ is an integrable complex structure compatible with $\Omega$.
$M$ is of {\em Vaisman type} if the Lee form $\theta$ is parallel with respect to $g$, or equivalently 
if the {\em Lee field} $\xi$, the dual vector field of $\theta$ by the metric $g$,  is Killing.

A {\em homogeneous l.c.K. manifold} $(M, g, J)$ is a homogeneous Hermitian manifold whose
associated fundamental form $\Omega$ satisfies the above condition $(*)$;
in particular the Lee form $\theta$ is also invariant.
We can express $M$, if necessary, as $G/H$ in an effective form,
where $G$ is a connected Lie group of automorphisms of $(M, g, J)$, and $H$ is a closed subgroup of $G$
which does not contain any normal subgroup of $G$.
\smallskip

Recall that a connected Lie group $G$ is {\em unimodular} if
it admits a bi-invariant Haar measure, or equivalently if its adjoint representation $\ad(X)$ 
in the Lie algebra $\g$ has trace zero for any $X \in \g$. Any compact, semi-simple, nilpotent,
reductive Lie groups, and Lie groups with a uniform lattice, are unimodular. Note that we have obtained
in the paper \cite{HK1} a complete classification of $4$-dimensional unimodular Lie groups with and
without lattices. 

\smallskip

A homogeneous l.c.K. manifold of a compact Lie group is nothing but
a compact homogeneous l.c.K. manifold; and we have already shown 
in \cite{HK2} a holomorphic structure theorem asserting that 
it is a holomorphic fiber bundle over a flag manifold with fiber a $1$-dimensional complex torus,
and a metric structure theorem asserting that all of them are of Vaisman type. 
Note that we have an extended version of the above metric theorem for homogeneous l.c.K. manifolds
in \cite{ACHK}, showing a sufficient condition for being of Vaisman type, 
that is, the normalizer of the isotropy subgroup $H$ in $G$ is compact,
while showing an example of a non-Vaisman l.c.K. structure on a reductive Lie group. 
For the $4$-dimensional case, we have shown in \cite{HK1} that a Hopf manifold of homogeneous type is
the only compact homogeneous l.c.K. manifold. 

\medskip

We recall that a {\em contact metric structure} 
$\{\phi, \eta, \widetilde{J}, g\}$ on $M^{2n+1}$
is a contact structure $\phi \, , \phi \wedge (d \phi)^n \not= 0$
with the {\em Reeb field} $\eta \, , i(\eta) \phi = 1, i(\eta) d \phi = 0$,
a $(1, 1)$-tensor $\widetilde{J} \, , \widetilde{J}^2 = -I + \phi \otimes \eta$
and a Riemannian metric $g$, satisfying $g(X, Y) = \phi(X) \phi(Y) + d \, \phi (X, \widetilde{J} Y)$.
A {\em Sasaki structure} on $M^{2n+1}$ is
a contact metric structure
$\{\phi, \eta, \widetilde{J}, g\}$ satisfying ${\cal L}_{\eta} g = 0$ (i.e. $\eta$ is a Killing field)
and the integrability of  $J = \widetilde{J}|{\cal D}$, where ${\cal D} = {\rm ker} \, \phi$ is a CR-structure.

\smallskip

An automorphism of a Sasaki manifold $M$ is a diffeomorphism $\Psi$ which satisfies
$$\Psi^* \phi = \phi,\; J \Psi_* = \Psi_* J\,.$$ 
Note that the automorphism group of a Sasaki manifold is a closed 
Lie subgroup of the isometry group of $M$.
$M$ is a {\em homogeneous Sasaki manifold}, if a connected Lie group $G$ of automorphisms
acts transitively on $M$, that is, $M= G/H$ with isotropy subgroup $H$ of $G$. 

\medskip



A {\em Sasaki (Vaisman) Lie group} $G$ is a homogeneous Sasaki
(Vaisman) manifold with trivial isotropy subgroup. We can define and study Sasaki (Vaisman) structures
on the Lie algebra $\g$ of $G$, which correspond uniquely to Sasaki (Vaisman) structures on $G$.
For l.c.K. structure on $\g$, we need only consider the structure $(g, J)$ or $(\Omega, J)$ on
$\g$ satisfying $(*)$ where $g$ is a Riemannian  metric and $\Omega$ is a non-degenerate $2$-form on $\g$
compatible with $J$.
Since the Lee form $\theta$ is closed, the Vaisman condition is just
$$ g([\xi, X], Y) + g(X, [\xi, Y]) =0,$$
that is, $\xi$ is Killing.

\medskip

 A homogeneous Hermitian or Sasaki manifold may have different coset expressions
 $G/H$ and $G'/H'$. As a key strategy of proving a structure theorem of
homogeneous l.c.K. or Sasaki manifolds $G/H$ of a unimodular Lie group $G$, up to holomorphic isometry, 
we apply a {\em modification} of $G/H$ into $G'/H'$ (see Section $1$ for definition), 
which preserves holomorphic isometry and unimodularity. 
For Hermitian or Sasaki Lie groups, we see that modification is an equivalence relation, which preserves
Hermitian or Sasaki structures respectively and unimodularity.

\medskip

As main results of the paper we classify unimodular Sasaki and Vaisman Lie groups, up to modification 
(Theorem 2.1). More generally, we show a structure theorem for simply connected 
homogeneous Sasaki and Vaisman manifolds of unimodular Lie groups, up to holomorphic isometry (Theorem 4.1).

\medskip

{\em {\bf Theorem 2.1.} A simply connected Sasaki unimodular Lie group is
isomorphic to $N, SU(2)$, or $\widetilde{SL}(2, \R)$, up to modification.
Accordingly, a simply connected Vaisman unimodular Lie group is
isomorphic to one of the following, up to modification:
$$\R \times N, \R \times SU(2), \R \times \widetilde{SL}(2, \R),$$
where $N$ is a real Heisenberg Lie group and $\widetilde{SL}(2, \R)$ is the universal covering Lie group of $SL(2, \R)$.}

\bigskip

{\em {\bf Theorem 4.1.}  A simply connected homogeneous Vaisman manifold $M$ 
of a unimodular Lie group is holomorphically isometric to $M'=\R \times M_1$ with canonical
Vaisman structure, where $M_1$ is a simply connected 
homogeneous Sasaki manifold of unimodular Lie group, which is a quantization of a 
simply connected homogeneous K\"ahler manifold $M_2$ of a reductive Lie group. 
As an Hermitian manifold $M$ is a holomorphic principal bundle over a simply connected
homogeneous K\"ahler manifold $M_2$ with fiber $\C^1$ or $\C^*$.}

\smallskip

 In the above statement we mean by a {\em quantization} of a homogeneous K\"ahler manifold $M_2$,
a principal bundle $M_1$ over $M_2$ with fiber $\R$ or $S^1$ satisfying $ d \psi = \omega$ 
for a contact form $\psi$ on $M_1$ and the K\"ahler form $\omega$ on $M_2$.

\medskip

A basic idea of the proofs is to show first that, up to modifications, a simply connected
homogeneous Vaisman manifold of unimodular Lie group can be assumed to have
the form $M=G/H$, where $G$ is a simply connected unimodular Lie group
of the form $G=\R \times G_1$ and $H$ is a connected compact subgroup of $G_1$ 
with ${\rm dim}\,Z(G_1) = 1$, where $Z(G_1)$ denotes the center of $G_1$; and
combining with our previous results in \cite{HK2}, {\cite{ACHK}  yields the following
structure theorem:

\smallskip

{\em Let $\g, \h$ be the Lie algebras of
$G, H$ respectively. Then the pair $\{\g, \h\}$ is of the following form.
$$\g= \R \times \g_1,$$
where $\g_1= {\rm ker}\, \theta \supset \h$, and $\g_1$ can be expressed as a central extension of $\g_2$ by
$\R$:
$$0 \rightarrow \R \rightarrow \g_1 \rightarrow \g_2 \rightarrow 0.$$

The Lee field $\xi$ and the Reeb field $\eta =J\xi$ generate $Z(\g)$, where
$Z(\g)$ denotes the center of $\g$; and
the l.c.K. form $\Omega$ can be written as
$$\Omega= - \theta \wedge \psi + d \psi,$$
where $\psi$ is the Reeb form defining a contact form on the homogeneous Sasaki
manifold $M_1=G_1/H$. Let $\mk = \pi (\h)$ for the projection $\pi: \g_1 \rightarrow \g_2$.
Then the pair $\{\g_2, \mk\}$ defines a
homogeneous K\"ahler manifold $M_2=G_2/K$ with the K\"ahler form $\omega=d \psi| \g_2$,
where $G_1$ and $K$ are the Lie groups corresponding to $\g_1$ and $\mk$ respectively.}

\medskip

We further observe, applying some basic results from the field of homogeneous
K\"ahler manifolds, that  the homogeneous K\"ahler manifold $M_2$ associated to $\{\g_2, \mk\}$
is of reductive type. Hence we can reduce the classification problem of homogeneous
Vaisman manifolds of unimodular type to that of homogeneous Sasaki manifolds of the same type, 
which are quantizations of homogeneous K\"ahler manifolds of a reductive Lie group.
We already know that a simply connected homogeneous K\"ahler manifold of a reductive Lie group
is a K\"ahlerian product of $\C^k$ and a homogeneous K\"ahler manifold of semi-simple Lie group
(which has the structure of a holomorphic fiber bundle over a symmetric domain with fiber a flag manifold). 

Conversely, starting from a simply connected homogeneous K\"ahler manifold $M_2$
of a reductive Lie group, we may construct its quantization which will be a simply connected
homogeneous Sasaki manifold $M_1$ and then take a product with $\R$, making it a simply connected
homogeneous Vaisman manifold $M$ of unimodular type. Here the quantization must be
the one induced from a central extension of a K\"ahler algebra $(\g_2, \mk)$ of reductive
Lie algebra as stated above. We assert in general that a simply connected homogeneous 
K\"ahler manifold $M_2$ of a reductive Lie group is $\R$-quantizable to a simply connected 
homogeneous Sasaki manifold $M_1$ if and only if $M_2$ is a product of $\C^k$ and a symmetric domain, 
which is exactly the case when $M_2$ contains no flag manifolds; and $M_2$ is $S^1$-quantizable 
in all other cases.

\medskip

The paper is organized as follows. In Section 1 we review some basic terminologies and results in the field of
homogeneous manifolds; in particular we discuss {\em Modification}, which was a key strategy 
in proving a structure theorem of homogeneous
K\"ahler manifolds (Fundamental Conjecture of Gindikin and Vinberg)
\cite{DN}, in a slightly more general setting. As an important observation we see that modification
in the category of unimodular Lie groups (Lie algebras) is an equivalence relation.
In Section 2 we discuss Sasaki and Vaisman Lie algebras (Lie groups); and prove Theorem 2.1.
In Section 3 we provide some examples of Vaisman and non-Vaisman l.c.K. Lie groups.
In Section 4 we discuss homogeneous Sasaki and Vaisman manifolds and prove Theorem 4.1;
and also prove, applying some results in the field of homogeneous K\"ahler manifolds, a more detailed
structure theorem of homogenous Sasaki manifolds of unimodular Lie groups (Theorem 4.2).

 
\section{Preliminaries} 

Let $M=G/H$ be a homogeneous space of a connected Lie group $G$ with closed
subgroup $H$. Then the tangent bundle of $M$ is given as a $G$-bundle 
$G \times_H \g/\h$ over $M=G/H$ with fiber $\g/\h$, where the action of $H$ on the
fiber is given by ${\rm Ad}(x) \, (x \in H)$. A vector field
on $M$ is a section of this bundle; and a $p$-form on $M$ is a section of the
$G$-bundle $G \times_H \wedge^p (\g/\h)^*$, where the action of $H$ on
the fiber is given by ${\rm Ad}(x)^* \, (x \in H)$. A vector field (respectively
$p$-form), which is invariant by the left action of $G$, is canonically identified
with an element of $(\g/\h)^H$ (respectively $(\wedge^p (\g/\h)^*)^H$),
which is the set of elements of $\g/\h$ (respectively $\wedge^p (\g/\h)^*$) invariant by
the adjoint action of $H$. An invariant complex structure $J$ on $M$ is likewise considered
as an element $J$ of ${\rm Aut}(\g/\h)$ such that $J^2=-1$ and
${\rm Ad}(x) J=J {\rm Ad}(x) \, (x \in H)$. Note that we may also consider an invariant
$p$-form as an element of $\wedge^p \g^*$ vanishing on
$\h$ and invariant by the action ${\rm Ad}(x)^* \, (x \in H)$; and an invariant complex structure
as an element $J$ of ${\rm End}(\g)$ such that $J^2 = - 1$ (mod $\h$) and $J \h \subset \h$.

\medskip


We next define and discuss {\em modification} in the category of homogeneous Hermitian manifolds 
and Lie groups.
Let $\g$ be a Lie algebra with Hermitian structure $(g, J)$, and ${\rm Der} (\g)$
the derivation algebra of $\g$, which is a Lie subalgebra of ${\rm End} (\g)$.
Let $\mk$ be a subalgebra of ${\rm Der} (\g)$ consisting of
skew-symmetric derivations $\sigma$ compatible with $J$:
$$g(\sigma(X), Y)+g(X,\sigma(Y))=0,\,\,J \sigma = \sigma J \eqno(1.1)$$
for any $X, Y \in \g$.
We define the Lie algebra $\hat{\g}$ by setting
$$\hat{\g} = \g \rtimes \mk,$$
on which the new Lie brackets are defined by
$$[(X, \sigma), (Y, \sigma')] = ([X, Y] + \sigma(Y) - \sigma'(X), [\sigma, \sigma']).$$

We extend the metric $g$ and the complex structure $J$ to $\hat{\g}$,
setting $\hat{g}(\hat{\g}, \mk)=0$ and $\hat{J}(\mk)=0$.
We have a {\em modification} $\bar{\g}$ of $\g$:
$$\bar{\g}=\hat{\g}/\mk,$$
which is isomorphic to $\g$ as Hermitian vector space. Let $G$ (resp. $\hat{G}$) be
the simply connected Lie group with Lie algebra $\g$ (resp. $\hat{\g}$), and $K$ a compact
subgroup of $\hat{G}$ with Lie algebra $\mk$.
Then, $G$ is isomorphic to $\hat{G}/K$ as homogeneous Hermitian manifold. 
It should be noted that the unimodularity is preserved by the modification since it is a skew symmetric
operation (1.1).

\medskip

Any subgroup $G'$ of $\hat{G}$ canonically acts on $G$; and the action is simply transitive if and only if 
the Lie algebra $\g'$ of $G'$ is of the form
$$\g' = \{(X, \phi(X)) \in  \hat{\g} \,| \, X \in \g\},$$
where $\phi$ is a linear map from $\g$ to $\mk$. The Lie bracket on $\g'$ is defined by
$$ [(X, \phi(X)), (Y, \phi(Y))]' = ([X, Y] + \phi(X) (Y) - \phi(Y) (X),\, [\phi(X), \phi(Y)]),$$
In case $\mk$ is abelian, the projection of $\hat{\g}$ onto $\bar{\g}=\hat{\g}/\mk$ maps $\g'$ isomorphically
(as a vector space)
onto $\bar{\g}$, defining a Lie algebra structure on $\bar{\g}$:
$$[X, Y]^- = [X, Y] + \phi(X) (Y) - \phi(Y) (X) \eqno(1.2)$$
for $X, Y \in \bar{\g}$. The Lie group $\bar{G}$ with Lie algebra $\bar{\g}$ is called
a {\em modification} of the Lie group $G$. Note that $\bar{G}$ preserves the original Hermitian structure on $G$.

\medskip

We consider the set $\mathfrak L$ of linear maps $\phi: \g \rightarrow \mk$ satisfying the condition
$\phi([\g,\g])=0, \phi(\sigma(X))=0$ for any $X \in \g$ and $\sigma \in \mk$.
Since $\mk$ is abelian, $\mathfrak L$ may also considered as the set of Lie algebra homomorphisms 
$\phi: \g \rightarrow \mk$ satisfying the condition
$$\phi(\sigma(X))=0 \eqno(1.3)$$
for any $X \in \g$ and $\sigma \in \mk$. In particular, we have  $\phi_1(\phi_2(X)Y)=0$ for any $X, Y \in \g$ and
$\phi_1, \phi_2 \in {\mathfrak L}$.
It is easy to see that $\mathfrak L$ is a linear vector space, and any element $\phi(X)\; (X \in \bar{\g})$ defines
a skew symmetric derivation compatible with $J$ (condition (1.1)) with respect to
the new Lie bracket (1.2).
In particular, the modification of $\bar{\g}$ by $- \phi$ defines the original Lie algebra $\g$; and the composite
of two modifications $\phi_1, \phi_2$ is given by $\phi_1 + \phi_2$.
We also see that the modification is an equivalence relation in the set of Hermitian Lie algebras (groups).

\medskip

\begin{Ex}

{\rm Let ${\g}'$ be a Lie algebra with a basis $\{X, Y, Z, W\}$ for which 
the bracket multiplication is defined by
$$[X,Y]=-Z, [W,X]=-Y, [W,Y]=X,$$
and other brackets vanish.
A complex structure $J$ on ${\g}'$ is defined by
$$JX=-Y, JY=X, JZ=-W, JW=Z \eqno(1.4)$$

An Hermitian metric $g$ is defined such that $X,Y,Z,W$ is an orthogonal basis.
Let $\n$ be the Heisenberg Lie algebra with a basis $\{X, Y, Z\}$ for which 
the bracket multiplication is defined by
$$[X,Y]=-Z,$$
and other brackets vanish.
We see that $\bar{\g}$ is a modification of  $\g=\n \times \R$.
A linear map $\phi: \g \rightarrow {\rm Der}\,(\g)$ is defined as
$$\phi(X)=\phi(Y)=\phi(Z)=0, \phi(W)={\rm ad}_W,$$
where ${\rm ad}_W$ is defined by
$${\rm ad}_W (X)=-Y, {\rm ad}_W (Y)=X, {\rm ad}_W (Z)=0, {\rm ad}_W (W)=0.$$

It is clear that ${\rm ad}_W$ is skew-symmetric with respect to $g$ and
compatible with $J$. Hence, setting $\mk =<{\rm ad}_W>$,  
we get a modification $\bar{\g}$ of $\g$:
$$\bar{\g}=\g \rtimes \mk/ \mk.$$
Since $\phi$ clearly satisfies the condition (1.3), 
$\bar{\g}$ can be identified with $\g'$ through the map 
$\psi: \g' \rightarrow \g \rtimes \mk  \rightarrow \bar{\g}$
defined by $\psi(X)={\rm pr} (X, \phi(X)) $.

\smallskip

Note that $\g$ is a nilpotent Lie algebra and $\g'$ is a unimodular non-nilpotent solvable Lie algebra.
The corresponding Lie groups $G$ and $G'$ with the integrable complex structure (1.4)
admit uniform lattices, defining Primary and Secondary Kodaira
surfaces respectively. Both of them are Vaisman Lie groups with a l.c.K. form $\Omega$ defined by
$$\Omega = x \wedge y + z \wedge w$$
with the Lee form $w$, where $x, y, z, w$ are the Maurer-Cartan forms corresponding to $X,Y,Z,W$ respectively.
}
\end{Ex}

\bigskip

We can define a modification of a pair $(\g, \h)$ of an Hermitian Lie algebra $\g$ and
a subalgebra $\h$ of $\g$ under the additional condition:
$$\sigma(\h) \subset \h,\, J \sigma = \sigma J \;({\rm mod}\; \h) \;\eqno(1.5)$$
for any $\sigma \in \mk$. We get a modification $(\g', \h')$ of $(\g, \h)$ as
$$\g' = \g \rtimes \mk, \,\h' = \h \rtimes \mk.$$

Let $G$  (resp. $G'$) be the simply connected
Lie group with Lie algebra $\g$ (resp. $\g'$), and $H$ (resp. $H'$) be its closed subgroup
with Lie algebra $\h$ (resp. $\h'$). $G'/H'$ is isomorphic to $G/H$ as Hermitian manifold:

\medskip

For modification in the category of homogeneous Sasaki manifolds $G/H$, or the corresponding
Lie algebras $(\g, \h)$ with Sasaki structure $\{\phi, \eta, \widetilde{J}, g\}$,
we consider a subalgebra $\mk$ of ${\rm Der} (\g)$ consisting of
skew-symmetric derivations $\sigma$ compatible with $\widetilde{J}$:
$$g(\sigma(X), Y)+g(X,\sigma(Y))=0,\,\,\widetilde{J} \sigma = \sigma \widetilde{J} \eqno(1.1)'$$
for any $X, Y \in \g$. Then we can define the modification of Sasaki algebras
$(\g, \h)$ in the same way as  for the case of Hermitian algebras.

\medskip

The following lemma is a key in the whole arguments for
the proofs of our main results.

\begin{Lm}

Let $M=G/H$ be a simply connected homogeneous Vaisman manifold
where $H$ is a connected subgroup of a simply connected Lie group $G$.
Then, we can modify, if necessary, $\g/\h$ into $\g'/\h'$ with ${\rm dim}\, Z(\g') = 2$
and ${\rm dim}\,\h' \leq {\rm dim}\, \h+1$. Hence, $G/H$ is isomorphic to $G'/H'$
as homogeneous Vaisman manifold, where $(G',H')$ is the corresponding Lie groups of $(\g', \h')$.
Similarly, for a simply connected homogeneous Sasaki manifold $G/H$ we can modify,
if necessary, $\g/\h$ into $\g'/\h'$ with ${\rm dim}\, Z(\g') = 1$.
\end{Lm}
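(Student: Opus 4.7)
The plan is to apply the modification formalism of Section 1 in order to push the adjoint actions of the Lee and Reeb fields into an enlarged isotropy, so that these fields become central in the modified Lie algebra. I would begin by fixing $H$-invariant representatives $\xi, \eta \in \g$ of the Lee and Reeb vector fields (recall $\eta = J\xi$).

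The first key step is to show that on a simply connected homogeneous Vaisman manifold the parallel Killing Lee field $\xi$ may be assumed to lie in the center of $\g$. Since $\nabla \xi = 0$, the de Rham decomposition of the simply connected Riemannian manifold $M$ yields a splitting $M = \R \times M_1$ with $\xi$ tangent to the $\R$-factor. Because $G$ preserves $\theta$, and hence $\xi$, the one-parameter flow of $\xi$ commutes with $G$; as a one-parameter subgroup of $G$ it therefore lies in the center, giving $\mathrm{ad}_\xi = 0$. The second step is to verify that $\mathrm{ad}_\eta \in \mathrm{Der}(\g)$ is skew-symmetric, commutes with $J$, and preserves $\h$, so that it fulfils conditions (1.1) and (1.5) required of a modifying derivation; these follow from $\eta$ being Killing with $J$-preserving flow, together with the $H$-invariance of $\eta$ modulo $\h$ and $J\h \subset \h$.

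Next I would carry out the pair-modification with $\mk = \langle \mathrm{ad}_\eta \rangle$, forming $\g' = \g \rtimes \mk$ and $\h' = \h \rtimes \mk$; if $\eta$ happens to be already central in $\g$, no modification is needed. A direct bracket computation of the same type as illustrated in Example 1.1 shows that $(\eta, -\mathrm{ad}_\eta) \in \g'$ is central, while $(\xi, 0)$ remains central, so $Z(\g')$ contains the two-dimensional subspace spanned by the Lee and Reeb directions and $\dim \h' \leq \dim \h + 1$. The homogeneous Vaisman structure on $G/H$ is transferred to $G'/H'$ by the general formalism of Section 1. The Sasaki case is strictly analogous but simpler: there is no Lee field, and the single modification $\mk = \langle \mathrm{ad}_\eta \rangle$ makes the Reeb field $\eta$ central, giving $\dim Z(\g') \geq 1$.

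The main obstacle is the first step, namely extracting the Lie-algebraic statement $\mathrm{ad}_\xi = 0$ from the geometric hypothesis of parallelism. The argument is not purely algebraic: one must combine the Riemannian de Rham splitting with the fact that the transitive $G$-action is by isometries preserving the Lee form, and then exploit simple connectedness to conclude that the $\R$-factor in $M = \R \times M_1$ is $G$-central rather than merely $G$-invariant. Once this is in place, the verification of the modification conditions for $\mathrm{ad}_\eta$ and the computation of the new center are routine applications of the machinery set up in Section 1.
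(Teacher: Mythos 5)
There is a genuine gap in your first step. You claim that $\mathrm{ad}_\xi=0$ already holds in $\g$, arguing that the flow of the ($G$-invariant) Lee field commutes with the $G$-action and is therefore a \emph{central one-parameter subgroup of $G$}. The first half is true: the flow of any $G$-invariant vector field commutes with the $G$-action on $M$. But that flow is a one-parameter group of transformations of $M$ that in general does \emph{not} lie in $G$ at all (on a Lie group with trivial isotropy, the left-invariant field $\xi$ generates \emph{right} translations, which are left translations only when $\xi$ is already central). So nothing forces $\mathrm{ad}_\xi=0$. Example 1.1 of the paper is a direct counterexample: for the secondary Kodaira algebra $\g'$ with Lee field $\xi=W$ one has $[W,X]=-Y\neq 0$, yet $W$ is Killing and the structure is Vaisman. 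The correct way to realize your idea is precisely a modification: $\mathrm{ad}_\xi$ is a skew-symmetric derivation compatible with $J$, one sets $\mk=\langle \mathrm{ad}_\xi\rangle$, and the map $X\mapsto (X,\phi(X))$ with $\phi(\xi)=\mathrm{ad}_\xi$ is well defined as a modification because $\xi\notin[\g,\g]$ (this follows from $d\theta=0$ and $\theta(\xi)=1$, so $\theta$ kills $[\g,\g]$). This makes $\xi$ central in the modified algebra $\g'$ while leaving $\h$ unchanged; only then does one perform the second modification by $\mathrm{ad}_\eta$, as you do. Your de Rham splitting observation identifies the right enlarged transformation group, but extracting from it a new \emph{transitive} group in which $\xi$ is central is exactly the content of the modification you omit.

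Two smaller points. First, your argument only yields $\dim Z(\g')\geq 2$ (resp.\ $\geq 1$ in the Sasaki case); the equality asserted in the Lemma uses the known upper bound $\dim Z\leq 2$ for homogeneous Vaisman manifolds from \cite{HK2}, \cite{ACHK}, which you should cite. Second, the estimate $\dim\h'\leq\dim\h+1$ comes out correctly in your scheme, but note that in the paper's two-stage process the first (Lee-field) modification does not enlarge $\h$ at all, and only the second (Reeb-field) stage may add one dimension; if either field happens to be already central the corresponding stage is vacuous.
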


\begin{proof}
In fact, the set of invariant vector fields can be identified
with $(\g/\h)^{\h}$; and since the Lee field $\xi$ and Reeb field $\eta= J \xi$
are invariant they belong to this set. Since $\xi$ and $\eta$ are Killing and
compatible with the complex structure $J$, they define ${\rm ad}_\xi$ and
${\rm ad}_\eta$ in ${\rm Der} (\g)$, which commute with each other and
are compatible with $J$. 
They are also ${\rm ad}\, h$-invariant for $h \in \h$.

Let $\mk=<{\rm ad}_\xi>$, and 
$\hat{\g}= \g \rtimes \mk, \hat{\h}=\h \times \mk$.
We have  $\g/\h=\hat{\g}/\hat{\h}$, where $\hat{\g}$ has a central 
element $\zeta=\xi - {\rm ad}_\xi$ in $\hat{\g}$ which is identified 
with $\xi \,({\rm mod}\, \hat{\h})$.
Since the Lee form $\theta$ is closed and $\theta(\xi)=1$, we have $\xi \not\in [\g,\g]$.
Hence we have a modification of $\g$ into $\hat{\g}/\mk = \g'$ and $\hat{\h}/\mk = \h'= \h$
through the map $X \rightarrow (X, \phi(X))$. Therefore we have
$$\g/\h=\g'/\h'=\g'/\h$$
with $\xi \in Z(\g')$. In particular we have $\g' = \R \times \g_1$ with $\g_1 = {\rm ker}\, \theta \supset \h$,
where $\R$ is generated by $\xi$.
Similarly, we can modify $\g'/\h'$
into $\g''/\h''$ with $\xi, \eta \in Z(\g'')$. Note that in case $\xi$ or $\eta$ is
already in $Z(\g)$, ${\rm ad}_\xi$ or ${\rm ad}_\eta$ 
is trivial; and thus $\g'= \g\ \times \mk, \h'=\h \times \mk$ 
without any modification on $\g$ and $\h$. 
Since for a homogeneous Vaisman manifold $G''/H''$, the dimension
of the center is not greater than $2$ (\cite{HK2}, \cite{ACHK}), the Lee field and the Reeb field generate
$Z(\g'')$. Since $\h'=\h$, we have ${\rm dim}\,\h'' \leq {\rm dim}\, \h+1$.
\qed
\end{proof}

\medskip

We review some basic and historical results on a classification of homogeneous K\"ahler manifolds
(due to Dorfmeister, Nakajima, Vinberg, Gindikin, Piatetskii-Shapiro, Matsushima, Borel, Hano, Shima;
see \cite{B}, \cite{DN}, \cite{Han}, \cite{VGP} and references therein).
\smallskip

Let $M=G/H$ be a homogeneous K\"ahler manifold, where $H$ is a closed subgroup
of a simply connected Lie group $G$. Let $\g, \h$ be the Lie algebras of $G, H$ respectively.
Then, we can consider a K\"ahler structure on $G/H$ as a pair $(J, \omega)$ of a complex structure
$J \in {\rm End}(\g)$ and a skew symmetric
bilinear form $\omega$ on $\g$, satisfying the following condition:

\begin{list}{}{\topsep=5pt \leftmargin=20pt \itemindent=5pt \parsep=0pt \itemsep=3pt}

\item[ (i)] $J \, \h \subset \h$, $J^2 =-I \;({\rm mod}\, \h)$
\item[ (ii)] ${\rm ad}_X J = J \,{\rm ad}_X \;({\rm mod}\, \h)$ for $X \in \h$
\item[ (iii)] $[JX, JY] = [X,Y] + J \,[JX, Y] + J \,[X, JY] \;({\rm mod}\, \h)$
\item[ (iv)] $\omega(\h, \g)=0,\; \omega(JX,JY)=\omega(X,Y)$
\item[ (v)] $\omega([X,Y],Z) + \omega([Y,Z],X) + \omega([Z,X],Y) = 0$
\item[ (vi)] $\omega(JX, X) \not= 0$ for X $\not\in \h$
\end{list}

A {\em K\"ahler algebra} $(\g, \h, J, \omega)$ is a Lie algebra $\g$ with subalgebra $\h$,
$J \in {\rm End}(\g)$ and a skew symmetric bilinear form $\omega$ on $\g$, satisfying the above conditions.
A {\em K\"ahler algebra} $(\g, \h, J, \omega)$ is {\em effective} if $\h$ includes no non-trivial
ideals of $\g$.
A {\em $J$-algebra} is a K\"ahler algebra $(\g, \h, J, \omega)$ with a linear form
$\rho$ such that $d \rho = \omega$. Note that the condition $d \rho = \omega$
is often referred to as {\em non-degenerate}; for a K\"ahler algebra of effective form,
it is actually equivalent to non-degeneracy of
the Ricci curvature form $\mathfrak r$ of the K\"ahler structure (due to Nakajima \cite{N}).

\medskip

{\bf Structure Theorem of Homogeneous K\"ahler Manifolds:} 
{\em A homogeneous K\"ahler manifold is a holomorphic fiber bundle  over a homogeneous
bounded domain with fiber a K\"ahlerian product of a locally flat K\"ahler manifold and a flag manifold.
In particular, due to the Grauert-Oka principle \cite{G}, it is biholomorphic to the product of these complex manifolds. }

\medskip

A key idea of the proof \cite{DN} for the theorem is to show, applying modifications if necessary,
that there exists an abelian ideal $\ka$ and a $J$-algebra $\f$ containing $\h$ such that 
$$ \g = \ka \rtimes \f \eqno(1.6)$$
which is a semi-direct sum, and $\g$ is {\em quasi-normal}, that is, ${\rm ad} (X)$ has only 
real eigenvalues for any element $X \in {\rm rad} (\g)$, where ${\rm rad} (\g)$ is the radical of $\g$.
There also exists a compact  $J$-subalgebra $\q$ of $\f$ satisfying $\f \supset \q \supset \h$ 
for which we can express $M$ as a fiber bundle:

$$P/H \rightarrow M=G/H \rightarrow G/P \eqno(1.7)$$
where $P=AQ$ and $A, Q$ are the Lie groups associated to $\ka, \q$ respectively; and 
$P/H= A/\Gamma \times Q/H_0$ with $H=H_0 \Gamma$ for the connected component $H_0$ of $H$
and a discrete subgroup $\Gamma$ of $A$.
The base space $G/P$ defines a homogeneous bounded domain,
$A/\Gamma$ a locally flat complex manifold, $Q/H_0$ a flag manifold, and
the fibration is holomorphic. 


\section{Sasaki and Vaisman Unimodular Lie algebras} 

 A Lie group $G$ is a homogeneous space with its own transitive action on the left.
It is a homogeneous l.c.K. manifold if it admits a left-invariant Hermitian structure
$(g, J)$ satisfying
$$d \Omega = \Omega \wedge \theta$$
for its associated fundamental form $\Omega$ and a closed $1$-form $\theta$ (Lee form).
Note that $\theta$ must be also left-invariant. It is clear that $G$ admits a left-invariant
l.c.K. structure if and only if its Lie algebra $\g$ admits an l.c.K. form $\Omega$. We call
$\g$ with an l.c.K. form $\Omega$ an l.c.K. Lie algebra. 


We already know  a classification of
l.c.K. reductive Lie algebras (\cite{HK1}, \cite{ACHK}), and l.c.K.  nilpotent Lie algebras (\cite {S1}, \cite{HK1}),
determining at the same time which l.c.K. structures are of Vaisman type. In this section we determine all
Vaisman unimodular Lie algebras, up to modifications.

\begin{Th} 

A Sasaki unimodular Lie algebra is, up to modification, isomorphic to one of the three types:
$\n$, $\spu(2)$, $\spl(2, \R)$. Accordingly, Vaisman unimodular Lie algebra is, up to modification,
isomorphic to one of the following:
$$\R \times \n, \R \times \spu(2), \R \times \spl(2, \R),$$
where $\n$ is a Heisenberg Lie algebra. In terms of Lie groups,
a simply connected Sasaki unimodular Lie group is, up to modification,
isomorphic to one of the three types: $N, SU(2), \widetilde{SL}(2, \R)$.
Accordingly, a simply connected Vaisman unimodular Lie group is, up to modification,
isomorphic to one of the following:
$$\R \times N, \R \times SU(2), \R \times \widetilde{SL}(2, \R).$$

\end{Th}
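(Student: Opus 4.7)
The Vaisman part of Theorem 2.1 reduces to the Sasaki part: by the decomposition $\g = \R\xi \oplus \g_1$ established in the Introduction (where $\g_1 = \ker\theta$ is a Sasaki algebra containing the Reeb field $\eta$ in its center), a simply connected Vaisman unimodular Lie algebra is, up to modification, the product of $\R$ with a Sasaki unimodular Lie algebra. I therefore focus on the Sasaki case. Starting from a unimodular Sasaki Lie algebra $\g$ with Reeb field $\eta$, I apply Lemma 1.1 to obtain, after modification, a pair $(\g', \h')$ with $\dim Z(\g') = 1$ spanned by $\eta$ and $\dim \h' \leq 1$; unimodularity is preserved because the derivations added are skew-symmetric and hence trace-zero. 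Then I form the K\"ahler quotient $\g_2 = \g'/\langle\eta\rangle$ with projected isotropy $\mk = \pi(\h')$ of dimension at most $1$, carrying the K\"ahler form $\omega = d\psi|_{\g_2}$.

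The pair $(\g_2, \mk)$ defines a simply connected unimodular homogeneous K\"ahler manifold $M_2 = G_2/K$. Next I would apply the Structure Theorem of Homogeneous K\"ahler Manifolds and observe that unimodularity of $\g_2$ forces the Dorfmeister--Nakajima decomposition $\g_2 = \ka \rtimes \f$ from (1.6) to degenerate: the characteristic normal $J$-subalgebra that would govern a nontrivial homogeneous bounded-domain base is split solvable (Iwasawa type) and hence non-unimodular in positive dimension. Consequently $\g_2$ is reductive and $M_2$ decomposes as a K\"ahlerian product $\C^k \times M_2''$ with $M_2''$ a homogeneous K\"ahler manifold of a semisimple Lie group. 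Combined with the bound $\dim \mk \leq 1$, the semisimple piece (if nontrivial) is either the compact flag $\C P^1 = SU(2)/U(1)$ or the symmetric domain $H^2 = \widetilde{SL}(2,\R)/SO(2)$, each with $1$-dimensional isotropy.

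The crucial final step is to exclude the mixed product cases $\C^k \times M_2''$ with $k \geq 1$ and $M_2''$ nontrivial. Here I would use $H^2(\spu(2),\R) = H^2(\spl(2,\R),\R) = 0$ to argue that the cocycle $\omega$ restricted to the semisimple summand is a coboundary, so the central extension $\g_1' \to \g_2$ splits off the semisimple factor as a direct summand $\n_{2k+1} \oplus \s$. A direct check then shows that the Sasaki compatibility $\widetilde{J}^2 = -I + \phi \otimes \eta$ with a unique central Reeb $\eta$ forces $d\psi$ to be degenerate on either the Heisenberg or the $\s$-part, contradicting the contact condition (this obstruction rules out $\n \oplus \spu(2)$ as a Sasaki algebra). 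Hence only three pure cases survive: $\g_2 = \R^{2k}$ gives $\g_1' \cong \n$; $\g_2 = \spu(2)$ with $\mk = \mt$ gives $\g_1' \cong \R \oplus \spu(2)$, which is modification-equivalent to $\spu(2)$; and $\g_2 = \spl(2,\R)$ gives $\g_1'$ modification-equivalent to $\spl(2,\R)$. Since modification is an equivalence relation in the unimodular category (Section 1), inverting it recovers $\g$ as a modification of $\n$, $\spu(2)$, or $\spl(2,\R)$, and the Vaisman statement follows by taking $\R \times -$. The principal obstacle lies in this exclusion of mixed products, which combines the non-degeneracy of the Sasaki contact form with the vanishing degree-$2$ cohomology of the semisimple summands; all the other steps are applications of Lemma 1.1 and the known structure theory.
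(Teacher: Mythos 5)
Your route---centralize the Reeb field via Lemma 1.1, pass to the K\"ahler quotient $(\g_2,\mk)$ with $\dim\mk\le 1$, invoke reductivity, and reconstruct---is genuinely different from the paper's, and it breaks down exactly at the step you flag as crucial: excluding the mixed products $\C^k\times M_2''$ with $k\ge 1$ and $M_2''$ semisimple. Your claimed obstruction (that Sasaki compatibility forces $d\psi$ to degenerate on one summand, violating the contact condition) is false: the central extension $\g_1\cong\n_{2k+1}\oplus\s$ with isotropy $\h_1$ covering $\mk$ \emph{is} a homogeneous Sasaki manifold --- $d\psi=\omega$ is nondegenerate on $\g_1/(\R\eta+\h_1)\cong\g_2/\mk$ and $\psi\wedge(d\psi)^n\ne 0$ --- and the paper's own Theorem 4.2 asserts that every $\C^k\times Q/V\times P/U$ is quantizable. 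What fails in the mixed case is not the Sasaki structure but simple transitivity: you must show $\h_1$ has no complementary subalgebra in $\g_1$ arising as the graph of some $\phi\in{\mathfrak L}$, so that $(\g_1,\h_1)$ cannot come from a Sasaki \emph{Lie algebra} by inverse modification. For $k=1$, $\s=\spu(2)$, write $\g_1=\n_3\oplus\spu(2)$ with $\n_3=\langle\eta,e_1,e_2\rangle$, $[e_1,e_2]=-\eta$, and $\h_1=\langle T_0-c\eta\rangle$, $c\ne 0$, $T_0\in\spu(2)$: any $5$-dimensional subalgebra $\g_0$ transverse to $\h_1$ must project onto $\spu(2)$ (which has no $2$-dimensional subalgebras), hence contains some $T_0+\mu\eta$, while $\g_0\cap\n_3$ is $2$-dimensional and therefore contains $\eta=-[e_1,e_2]$; together these put $T_0-c\eta$ in $\g_0$, a contradiction. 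That is the argument your proof needs and does not contain. (Relatedly, ``$\R\oplus\spu(2)$ is modification-equivalent to $\spu(2)$'' cannot be literally correct, since modification preserves dimension; what you mean is an isomorphism of the coset space $(\R\oplus\spu(2))/\h_1$ with the Lie group $SU(2)$, which again is precisely a complementary-subalgebra statement.)

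The paper sidesteps all of this by never centralizing the Reeb field. It keeps the isotropy trivial and splits on $\dim Z(\g_0)\in\{0,1\}$ for $\g_0=\ker\theta$: if $\eta$ is central, $\g_0/\langle\eta\rangle$ is a unimodular K\"ahler \emph{Lie algebra}, hence flat and meta-abelian by Hano, giving $\g_0=\n$ up to modification; if not, $(\g_0,\langle\eta\rangle,d\psi)$ is an effective $J$-algebra (take $\rho=\psi$), so its Ricci form is nondegenerate by Nakajima, and Hano's unimodular theorem forces $\g_0$ semisimple, hence $\spu(2)$ or $\spl(2,\R)$. The mixed products never appear because the dichotomy ``$\eta$ central or not'' already separates the flat and semisimple alternatives. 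Your approach can be repaired by inserting the transversality argument above, but as written it has a genuine gap at its pivotal step.
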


\begin{proof}
Let $\g$ be a Vaisman unimodular Lie algebra of dimension $2k+2$
with an l.c.K. form $\Omega$ and Lee form $\theta$.
Applying modification, if necessary, we can assume that
$$\g=\R \times \g_0 \eqno(2.1)$$
where $\g_0 = {\rm ker}\,\theta$, and $\R$ is generated by the Lee field $\xi$.
$\g_0$ is a Sasaki Lie algebra with Reeb field $\eta$.
Let $\psi$ be the contact form and $\mk=<\eta>$, then $(\g_0, \mk, J|\g_0, d \psi)$
defines a K\"ahler algebra. The Ricci curvature form $\mathfrak r$ of the K\"ahler structure is given by
$${\mathfrak r}\,(X,Y)=-\kappa([X,Y]),$$
where $\kappa$ is the {\em Koszul form} defined by
$$\kappa(X)={\rm Tr}_{\g_0/\mk}\, ({\rm ad}\, JX - J{\rm ad}\, X),$$
which is well defined on $\g_0/\mk$ \cite{K}.
Now, in case ${\rm dim}\, Z(\g_0)= 1$,
$Z(\g_0)=\mk$, and $\g_0/\mk$ is a unimodular K\"ahler Lie algebra. Then due to Hano \cite{Han},
$\g_0/\mk$ is meta-abelian and locally flat; and thus, up to modification, isomorphic to $\C^n$ as K\"ahler algebra. 
Therefore we get $\g_0= \n$, up to modification. In case ${\rm dim}\, Z(\g_0)= 0$,
we see that the Ricci form $\kappa$ is non-degenerate. In fact, since we have 
 $i(\eta) \phi = 1, i(\eta) d \phi = 0$, and ${\rm ad}\,(\eta)$ is not trivial,
$\mk$ is not an ideal of $\g_0$. Therefore the K\"ahler algebra $(\g_0, \mk, d \psi)$ is in effective form.
Since the K\"ahler algebra $(\g_0, \mk, d \psi)$ is non-degenerate (that is, it defines a $J$-algebra)
the Ricci form  ${\mathfrak r}$ is non-degenerate \cite{N};
it follows (due to Hano \cite{Han}) that $\g_0$ must be semi-simple. Then it is well known \cite{BW} 
that $\g_0$ must be either $\spu(2)$ or $\spl(2, \R)$.
\qed
\end{proof}

\begin{Rm}

{\rm  A Vaisman unimodular solvable Lie algebra $\g$ is, up to modification, isomorphic to
$\R \times {\mathfrak n}$ (see Example 3.1. for a non-nilpotent case). Since modification $\phi$ is a skew-symmetric
operation, its eigenvalues are all pure-imaginary; in particular, a Vaisman unimodular completely
solvable Lie algebra is isomorphic to $\R \times {\mathfrak n}$ \,\cite{S2}.}

\end{Rm}

\begin{Rm}

{\rm We have determined all homogeneous l.c.K. structures on $\R \times \n$
and $\R \times \spu(2)$, which are all of Vaisman type \cite{HK1}. 
We have also determined all homogeneous l.c.K. structures on $\R \times \spl(2, \R)$,
some of them are of non-Vaisman type, as we will see in the next section.
}

\end{Rm}


\section{l.c.K. unimodular Lie groups of non-Vaisman type} 

In this section we show examples of l.c.K. reductive Lie algebras of non-Vaisman type
(which we already discussed in our previous papers \cite{HK2}, \cite{ACHK}), illustrating
how Vaisman and non-Vaisman structures can be defined on $\R \times {\mathfrak sl}(2, \R)$.

\begin{Ex}

{\rm There exists a homogeneous l.c.K. structure on $\g=\R \times {\mathfrak sl}(2, \R)$
which is not of Vaisman type. Take a basis $\{X, Y, Z\}$ for ${\mathfrak sl}(2, \R)$
with bracket multiplication defined by
$$[X,Y]=-Z,\, [Z,X]=Y,\, [Z,Y]=-X \eqno(3.1)$$
and $T$ as a generator of the center $\R$ of $\g$, where we set
$$ 
X=\frac{1}{2}\left(
\begin{array}{cc}
0 & 1\\
1 & 0
\end{array}
\right),
\;
Y=\frac{1}{2}\left(
\begin{array}{cc}
1 & 0\\
0 & -1
\end{array}
\right),
\;
Z=\frac{1}{2}\left(
\begin{array}{cc}
0 & 1\\
-1 & 0
\end{array}
\right).
$$

Let $t, x, y, z,$ be the Maurer-Cartan forms corresponding to $T, X,Y,Z$
respectively; then we have
$$d t=0, d x=z \wedge y, d y=x \wedge z, d z=x \wedge y \eqno(3.2)$$
and an l.c.K. structure $\Omega=z \wedge t + x \wedge y$
compatible with an integrable homogeneous complex
structure $J$ on $\g$ defined by
$$J Y=X, J X=-Y, J T=Z, J Z=-T.$$

We can generalize $\Omega$ to an l.c.K. structure of the form
$$\Omega_{\psi} = \psi \wedge t+ d \psi \eqno(3.3)$$
compatible with the above complex structure $J$ on $\g$,
where $\psi= a x + b y + c z$ with $a,b,c \in \R$.

We see that the symmetric bilinear form $g_\psi(U,V)=\Omega_{\psi}(JU,V)$ is
represented, with respect to the basis $\{T, X, Y, Z\}$, by the  matrix 

$$ A= \left(
\begin{array}{cccc}
c & -b & a & 0\\
-b & c & 0 & a\\
a & 0 & c & b\\
0 & a & b & c
\end{array}
\right),$$
which has the characteristic polynomial $\Phi_A (u) = \{(u-c)^2 -(a^2+b^2)\}^2$,
and has only positive eigenvalues if and only if $c > 0, c^2 > a^2+b^2$. 
The Lee form is $\theta=t$ and the Lee field is
$$\xi=\frac{1}{D} (c T + b X - a Y),$$ 
with $D=c^2 -a^2-b^2$.
We have also
$$g_\psi(\xi,\xi)=\frac{c}{D}.$$
We can see that $g_\psi([\xi,U],V)+g_\psi(U,[\xi,V]) \not\equiv 0$ unless $a=b=0$.
In fact for $U=V=Z$, 
$$g_\psi([\xi, Z], Z)+g_\psi(Z,[\xi,Z])=2 g_\psi([\xi, Z], Z)= - \frac{2}{D}(a^2+b^2)=0$$
if and only if $a=b=0$. Conversely for the case $a=b=0$, it is easy to check that 
$g_\psi([\xi,U],V)+g_\psi(U,[\xi,V]) \equiv 0$. Therefore we have shown

{\em For $J$ and $\Omega_\psi$ defined above, $g_\psi$ defines a (positive definite) l.c.K. metric
if and only if  $c > 0, c^2 > a^2+b^2$. It is of Vaisman type if and only if
$ c >0,\, a=b=0$. And it is of non-Vaisman type if and only if $ c>0,\, c^2 > a^2 + b^2 > 0$.}

\medskip


We see that $\g$ can be modified into $\g' /<S>$,
where $\g' = \R \times {\mathfrak gl}(2, \R)$ for which the basis consists
of $X, Y, Z$ and
$$W=\frac{1}{2}\left(
\begin{array}{cc}
1 & 0\\
0 & 1
\end{array}
\right),
$$
and we set

$$S=\frac{1}{2}\left(
\begin{array}{cc}
1 & -1\\
1 & 1
\end{array}
\right).
$$

Since we have $W=Z+S \in {\mathfrak gl}(2, \R)$, ${\rm ad}_S$ defines a skew-symmetric action 
on $\g$ and $Z = W \;({\rm mod} S)$. Hence we get  $\g = \g' /<S>$ as an
l.c.K. algebra with the original l.c.K. form $\Omega$, which is of Vaisman type.
Note that ${\dim}_\R Z(\g')=2$. We see that for $\g$ with the l.c.K. form $\Omega_\psi$ of non-Vaisman 
type, ${\rm ad}_S$ is not compatible with the metric $g_\psi$. In fact for $U=bX-aY$, 
$$g_\psi([S, U], Z)+g_\psi(U,[S,Z])= g_\psi([Z, U], Z)= a^2+b^2=0$$
if and only if $a=b=0$. Hence we cannot modify $\g$ with the l.c.K. form $\Omega_\psi$ of non-Vaisman 
type into $\g = \g' /<S>$ with a compatible Vaisman structure.
}

\end{Ex} 


\section{Homogeneous Sasaki and Vaisman manifolds of unimodular Lie group} 

In this section we prove Theorem 4.1 and Theorem 4.2 as our main results.

\medskip

For any Sasaki manifold $N$, its {\em K\"ahler cone} $C(N)$ is defined as
$C(N)=\R_+ \times N$ with the K\"ahler form $\omega=r d r \wedge \psi + \frac{r^2}{2}d \psi$,
where a compatible complex structure $\widehat{J}$ is defined by 
$\widehat{J} \eta = \frac{1}{r} \partial_r$ and $\widehat{J} |_{\cal D} = J$.
Note that a contact metric manifold $N^{2n+1}$ with $\{\psi, \eta, \widetilde{J}\}$ is Sasaki
if and only if the K\"ahler cone $C(N)$ with $(\omega, \widetilde{J})$ is K\"ahlerian.

For any Sasaki manifold $N$ with contact form $\psi$,  we can define
an l.c.K. form $\Omega=\frac{2}{r^2} \omega = \frac{2}{r} d r \wedge \psi + d \psi$;
or taking $t= -2 \,{\rm log}\, r$,
$\Omega= - d t \wedge \psi + d \psi$ on $M=\R \times N$ or $S^1 \times N$ respectively,
which is of Vaisman type. We can define a family of complex structures $J$ compatible with
$\Omega$ by
$$J \,\partial_t = b \,\partial_t + (1+b^2) \,\eta, J \eta = - \partial_t - b \, \eta \eqno(4.1)$$
where $b \in \R$ and the Lee field is $J \eta$.
We call $M$ a {\em canonical Vaisman manifold} associated to a Sasaki manifold $N$.

\medskip

We have shown in Lemma 1.1 that up to modifications, a simply connected homogeneous Vaisman manifold of
unimodular Lie group can be assumed to have the form $M=G/H$, where $G$ is a simply connected unimodular
Lie group of the form $G=\R \times G_1$ and $H$ is a connected compact subgroup of $G_1$ 
with ${\rm dim}\, Z(G_1)=1$; and combining with our previous results in \cite{HK2}, \cite{ACHK} yields 

\begin{Ps} 

Let $\g, \h$ be the Lie algebras of
$G, H$ respectively. Then the pair $\{\g, \h\}$ is of the following form.
$$\g= \R \times \g_1,$$
where $\g_1= {\rm ker}\, \theta \supset \h$, and $\g_1$ can be expressed as a central extension of 
a Lie algebra $\g_2$ by $\R$:
$$0 \rightarrow \R \rightarrow \g_1 \rightarrow \g_2 \rightarrow 0.$$
The Lee field $\xi$ and the Reeb field $\eta =J\xi$ generate $Z(\g)$; and
the l.c.K. form $\Omega$ can be written as
$$\Omega= - \theta \wedge \psi + d \psi,$$
where $\psi$ is the Reeb form defining a contact form on the homogeneous Sasaki
manifold $G_1/H$, $G_1$ being the simply connected unimodular Lie group corresponding to $\g_1$.
Let $\mk = \pi (\h)$ for the projection $\pi: \g_1 \rightarrow \g_2$.
Then the pair $\{\g_2, \mk\}$ defines a
homogeneous K\"ahler manifold $G_2/K$ with the K\"ahler form $\omega=d \psi| \g_2$,
where $G_2$ and $K$ are the Lie groups corresponding to $\g_2$ and $\mk$ respectively.

\end{Ps}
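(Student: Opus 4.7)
The plan is to read each assertion off the structure already provided by Lemma 1.1, by locating the Lee and Reeb fields inside $\g$ and then descending to the quotient of $\g_1$ by its one-dimensional center.

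First, Lemma 1.1 has arranged, after modification, that $\g=\R\times\g_1$ with the $\R$-factor spanned by the Lee field $\xi$, and that $Z(\g)$ is two-dimensional, spanned by $\xi$ and the Reeb field $\eta=J\xi$. Since $\theta$ is the $g$-dual of $\xi$, we have $\ker\theta=\g_1$, and the Hermitian identity $g(\xi,J\xi)=0$ places $\eta$ inside $\g_1$. The inclusion $\h\subset\g_1$ is automatic since $\theta$, viewed as an invariant form on $G/H$, annihilates $\h$. The hypothesis $\dim Z(G_1)=1$ combined with $\eta\in Z(\g_1)$ then forces $Z(\g_1)=\R\eta$, producing the central extension
\[ 0\to\R\eta\to\g_1\to\g_2\to 0. \]

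Next, I would introduce $\psi\in\g_1^*$ as the $g$-dual of $\eta$. Centrality of $\eta$ in $\g_1$ together with ${\rm Ad}(H)$-invariance of $g$ makes $\psi$ invariant, and choosing an ${\rm Ad}(H)$-invariant reductive decomposition $\g_1=\h\oplus\m$ with $\eta\in\m$ yields $\psi|_\h=0$, so $\psi$ descends to a one-form on $G_1/H$. The normalizations $\psi(\eta)=1$ and $\iota_\eta d\psi=0$ (the latter from $\eta$ being Killing and $J$-related to $\xi$), together with integrability of $J$ on the contact distribution $\mathcal{D}=\ker\psi\subset\g_1$, exhibit $\psi$ as the Reeb form of a homogeneous Sasaki structure on $G_1/H$. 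For the l.c.K.\ decomposition $\Omega=-\theta\wedge\psi+d\psi$, I would verify that the right-hand side satisfies $d\alpha=\alpha\wedge\theta$ (immediate from $d\theta=0$) and that both sides agree on the three blocks $\xi\otimes\g$, $\eta\otimes\mathcal{D}$, and $\mathcal{D}\otimes\mathcal{D}$ by direct inspection using $\Omega(\cdot,\cdot)=g(\cdot,J\cdot)$ and the normalisations above.

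Finally, the contact distribution $\mathcal{D}$ is $J$-invariant and complementary to $\R\eta$, so the projection $\pi\colon\g_1\to\g_2$ transports $J|_{\mathcal{D}}$ to a well-defined complex structure $\bar J$ on $\g_2$, while $\iota_\eta d\psi=0$ lets $d\psi$ descend to a closed $2$-form $\omega=d\psi|_{\g_2}$. The subalgebra $\mk=\pi(\h)$ inherits ${\rm Ad}(K)$-invariance of $(\bar J,\omega)$ from upstairs, and positive-definiteness of $\omega(\cdot,\bar J\cdot)$ on $\g_2/\mk$ follows from the Sasaki metric restricted to $\mathcal{D}$. Thus $(\g_2,\mk,\bar J,\omega)$ is a K\"ahler algebra defining the homogeneous K\"ahler manifold $G_2/K$.

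I expect the main technical obstacle to be confirming that the induced Sasaki datum $\{\psi,\eta,\widetilde J,g|_{\g_1}\}$ on $G_1/H$ is genuine and not merely formal — in particular that $J|_{\mathcal{D}}$ remains integrable and that $(\g_2,\mk)$ is effective — which requires importing the fine isotropy-versus-center analysis of \cite{HK2} and \cite{ACHK} under the Vaisman hypothesis.
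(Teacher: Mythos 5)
Your proposal is correct and follows essentially the same route as the paper, which states Proposition 4.1 without a separate proof as a direct consequence of the normal form established in Lemma 1.1 ($\g=\R\times\g_1$, $Z(\g)=\langle\xi,\eta\rangle$, $\dim Z(\g_1)=1$) combined with the standard Vaisman--Sasaki--K\"ahler reduction from \cite{HK2} and \cite{ACHK}. Your fleshed-out verification of the decomposition $\Omega=-\theta\wedge\psi+d\psi$ and the descent of $(J,d\psi)$ to the quotient $\g_2=\g_1/\R\eta$ is exactly the content the authors delegate to those references.
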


\medskip

Let $\g_1$ be the Sasaki algebra with the Reeb field $\eta$ and the K\"ahler form $\omega$ in Proposition 4.1.
Then, the Lie bracket on $\g_1$ is the extension of $\g_2$ given by
$$[X, Y]_{\g_1}=[X, Y]_{\g_2} - \omega(X,Y) \eta,\, [\eta, Z]_{\g_1}=0 \eqno(4.2)$$
for $X, Y, Z \in \g_2$. Conversely, given a K\"ahler algebra $\{\g_2, \mk\}$ with
a K\"ahler form $\omega$ we can define  a Sasaki Lie algebra $\g_1$, which is
a central extension with a generator $\eta$ of $\R$ by the above formula.
Since $\eta$ is Killing, $\g_1$ is unimodular if and only if $\g_2$ is unimodular.
Hence $M_1=G_1/H$ is of unimodular type if and only if
$M_2=G_2/K$ is of the same type.

\medskip

We have then the following, which is one of our main results:

\begin{Th}

A simply connected homogeneous Vaisman manifold $M$ 
of unimodular Lie group is holomorphically isometric to $M'=\R \times M_1$ with canonical
Vaisman structure, where $M_1$ is a simply connected 
homogeneous Sasaki manifold of unimodular Lie group, which is a quantization of a 
simply connected homogeneous K\"ahler manifold $M_2$ of reductive Lie group.
As an Hermitian manifold $M$ is a holomorphic principal bundle over a simply connected 
homogeneous K\"ahler manifold $M_2$ with fiber $\C^1$ or $\C^*$.

\end{Th}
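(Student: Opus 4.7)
The strategy is to combine Lemma 1.1 and Proposition 4.1 to reduce $M$ to a standard form whose underlying K\"ahler base is a homogeneous K\"ahler manifold of unimodular type, then invoke a structure theorem from the theory of homogeneous K\"ahler manifolds to deduce that this base is of reductive type, and finally reconstruct $M$ by quantization and product with $\R$.

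Starting from $M=G/H$, Lemma 1.1 allows us to modify, without changing the holomorphic isometry class or unimodularity, to the case $\g=\R\times\g_1$ in which the Lee field $\xi$ and the Reeb field $\eta=J\xi$ are both central and $\h\subset\g_1$. Proposition 4.1 then presents $\g_1$ as a central $\R$-extension by $\eta$ of a Lie algebra $\g_2$, with $(\g_2,\mk)$, $\mk=\pi(\h)$, carrying a K\"ahler algebra structure whose K\"ahler form is $\omega=d\psi|_{\g_2}$. Accordingly $M_1=G_1/H$ is a homogeneous Sasaki manifold, a principal $\R$- or $S^1$-bundle over $M_2=G_2/K$, and the bracket relation (4.2) is precisely the condition $d\psi=\omega$, so that $M_1$ is a quantization of $M_2$. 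Since $\eta$ is Killing, $\g_2$ is unimodular if and only if $\g_1$ is, if and only if $\g$ is.

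To conclude that $M_2$ is of reductive type we apply the Dorfmeister--Nakajima decomposition $\g_2=\ka\rtimes\f$ with $\ka$ an abelian ideal and $\f$ a $J$-algebra, corresponding to a holomorphic fibration of $M_2$ over a homogeneous bounded domain. The classical fact, going back to Hano, that a simply connected homogeneous K\"ahler manifold of a unimodular Lie group splits as a K\"ahlerian product of a flat $\C^k$ and a homogeneous K\"ahler manifold of a semisimple Lie group then forces $\ka$ to sit in the center and $\g_2\cong\R^k\oplus\s$ with $\s$ semisimple. Having $M_2$ reductive, we reconstruct the Sasaki quantization $M_1$ and then the canonical Vaisman manifold $M'=\R\times M_1$ with $\Omega=-\theta\wedge\psi+d\psi$ and complex structure (4.1); the identification $M\cong M'$ is a holomorphic isometry because both sides are determined by the same invariant data $(\theta,\psi,\omega)$ attached to the central Lee and Reeb fields. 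The projection $M=\R\times M_1\to M_2$ has fiber $\R\times\R$ or $\R\times S^1$, which (4.1) organizes into $\C$ or $\C^*$, yielding the claimed holomorphic principal bundle structure.

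The main obstacle lies in the reductivity step: one must show that unimodularity of $G_2$ forces the bounded-domain base in the Dorfmeister--Nakajima fibration to be symmetric (hence controlled by the semisimple part) and the abelian ideal $\ka$ to collapse into the center. The required input is the classical observation, parallel to the Koszul-form argument already used in the proof of Theorem 2.1, that normal $j$-algebras of non-symmetric homogeneous bounded domains are solvable and strictly non-unimodular, so that the only way for $G_2$ to act unimodularly and transitively is through a reductive Lie algebra.
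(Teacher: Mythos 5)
Your proposal is correct and follows essentially the same route as the paper: reduction via Lemma 1.1 and Proposition 4.1, identification of $M_1$ as a quantization of $M_2$ through the central extension (4.2), and reductivity of $M_2$ obtained from the Dorfmeister--Nakajima decomposition $\g_2=\ka\rtimes\f$ together with Hano's theorem applied to the (necessarily unimodular) radical $\ka\rtimes\s$, which forces the solvable $J$-factor $\s$ to vanish. The only minor imprecision is that the resulting algebra is a semidirect product $\ma\rtimes\ml$ rather than a direct sum (the semisimple part may act on $\C^k$ by holomorphic isometries), though the manifold still splits as $\C^k\times L/K$; this does not affect the argument.
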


For the proof of Theorem 4.1, since we have already discussed and proved the first part of the theorem,
we need only show the last part that {\em a simply connected 
homogeneous Sasaki manifold $M_1$ of unimodular Lie group has the structure as stated in the theorem},
which is covered and more detailed in the following theorem:

\begin{Th} 

A simply connected homogeneous Sasaki manifold $M_1$ of unimodular Lie group is 
a quantization of a simply connected homogeneous K\"ahler manifold $M_2$ of reductive Lie group; that is,
$M_1$ is a principal bundle over $M_2$ with fiber $\R$ or $S^1$ satisfying $ d \psi = \omega$ 
for a contact form $\psi$ on $M_1$ and the K\"ahler form $\omega$ on $M_2$. 
 
\smallskip

The simply connected homogeneous K\"ahler manifold $M_2$ of reductive Lie group is
a K\"ahlerian product of $\C^k$, a flag manifold $Q/V$ with a compact semi-simple Lie group $Q$
and a parabolic subgroup $V$, and a homogeneous K\"ahler manifold $P/U$ with a non-compact 
semisimple Lie group $P$ and a closed subgroup $U$:
$$M_2 = \C^k \times Q/V \times P/U \eqno(4.3)$$
The homogeneous K\"ahler manifold $P/U$
has a structure of a holomorphic fiber bundle over a symmetric domain $P/L$ with fiber a flag manifold
$L/U$ for a maximal compact subgroup $L$ of $P$.

\smallskip

Furthermore, $M_1$ is $\R$-quantization of $M_2$ if and only if $M_2$ is a product of $\C^k$ and
a symmetric domain $P/L$ with $L=U$, and $S^1$-quantization of $M_2$ in all other cases.

\end{Th}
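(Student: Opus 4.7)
The plan is to take Proposition 4.1 as the starting point: since the canonical Vaisman manifold $\R \times M_1$ has a unimodular homogeneous Vaisman structure, the proposition gives $M_1 = G_1/H$ with a central extension $0 \to \R\eta \to \g_1 \to \g_2 \to 0$ generated by the Reeb field $\eta$, and the bracket formula (4.2) is exactly the statement $d\psi = \omega$ at the Lie algebra level, where $\psi$ is the dual of $\eta$. This exhibits $M_1 \to M_2 = G_2/K$ as a principal bundle whose fiber is the $1$-parameter subgroup generated by $\eta$, with connection form $\psi$ and curvature $\omega$, which is precisely the quantization condition. The rest of the theorem then breaks into two independent assertions: classifying $M_2$ and deciding when the fiber is $\R$ versus $S^1$.

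The main structural step is to show that $G_2$ can be taken reductive. Since $\eta$ is central, $\g_2$ inherits unimodularity from $\g_1$. Apply the Dorfmeister--Nakajima decomposition (1.6) to $(\g_2, \mk)$, writing $\g_2 = \ka \rtimes \f$ with $\ka$ an abelian ideal and $\f$ a $J$-algebra, and exploit quasi-normality (all eigenvalues of $\mathrm{ad}(X)$ real for $X \in \mathrm{rad}(\g_2)$). Together with the vanishing trace condition coming from unimodularity, and after passing to effective form and applying the modifications allowed by Section 1, the action of $\mathrm{rad}(\g_2)$ on $\ka$ must be trivial; hence the radical is central and $\g_2$ is reductive. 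This identifies $M_2$ as a homogeneous K\"ahler manifold of a reductive Lie group.

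The decomposition (4.3) is then immediate from the classical classification (\cite{B}, \cite{DN}): a simply connected reductive $G_2$ factors as $\C^k$ (center) times a compact semisimple group times a non-compact semisimple group, and the corresponding K\"ahlerian product decomposes $M_2$ as a flat $\C^k$, a flag manifold $Q/V$, and a homogeneous K\"ahler manifold $P/U$ fibering holomorphically over the symmetric domain $P/L$ with compact flag fiber $L/U$. For the $\R$/$S^1$ dichotomy, note that $\C^k$ and the Hermitian symmetric space $P/L$ are Stein, so their K\"ahler forms admit global potentials and are exact; hence when $M_2 = \C^k \times P/L$ with $L = U$ and no $Q/V$ factor, the form $\omega$ is globally exact on $M_2$, the bundle $M_1 \to M_2$ admits a global section, and the fiber is $\R$. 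If any flag factor appears (a $Q/V$ factor, or a non-trivial $L/U$ inside $P/U$), then $[\omega]$ has nontrivial periods through the compact cycles coming from roots of the compact semisimple piece; these periods generate a discrete subgroup of $\R$, the flow of $\eta$ closes up, and the Boothby--Wang construction yields an $S^1$-quantization.

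The principal obstacle is the reductivity step: one has to combine the normal $j$-algebra decomposition with the skew-symmetry properties of admissible modifications to rule out that a non-reductive radical survives under the unimodularity constraint. Once this is settled, the decomposition (4.3) is a citation and the $\R$/$S^1$ dichotomy reduces to the topological computation of $H^2$ on each irreducible factor, which is straightforward given the Stein property of $\C^k$ and $P/L$ versus the non-trivial Chern class supported on any flag factor.
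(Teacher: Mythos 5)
Your overall architecture matches the paper's: start from Proposition 4.1 and the bracket formula (4.2) to exhibit $M_1\to M_2$ as a quantization, reduce everything to showing that the K\"ahler algebra $(\g_2,\mk)$ is of reductive type, quote the Borel/Dorfmeister--Nakajima classification for the decomposition (4.3), and settle the $\R$ versus $S^1$ dichotomy by exactness of $\omega$ (contractible base) versus non-trivial periods on a flag factor forcing the Reeb flow to close up. The last step is essentially identical to the paper's argument (the paper phrases it as: a flag manifold is a simply connected Hodge manifold, hence $S^1$-quantizable, and sits as a K\"ahler submanifold of $P/U$; a product of $\C^k$ and a symmetric domain is contractible, hence $\R$-quantizable), and your treatment of products via the fibered central extension is consistent with the paper's construction (4.4).

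The genuine gap is in the reductivity step, which you yourself flag as ``the principal obstacle'' without resolving it. Your proposed mechanism --- quasi-normality (real eigenvalues of $\ad(X)$ for $X\in{\rm rad}(\g_2)$) combined with the trace-zero condition forces the radical to act trivially on $\ka$ --- does not work as stated: real eigenvalues summing to zero can perfectly well be $\{+\lambda,-\lambda\}$, so nothing forces the action to vanish. What is actually needed is the finer structure of the solvable part of a $J$-algebra (the root spaces of a normal $j$-algebra have eigenvalues of a fixed sign), and that is precisely the content of Hano's theorem, which is the pivot of the paper's proof of Proposition 4.2: one first uses the De Rham decomposition to identify $\ka\rtimes\s$ (with $\f=\s\times\q$ the Vinberg--Gindikin--Piatetskii-Shapiro splitting) as the radical of $\g_2$, observes that it inherits unimodularity, and then invokes Hano \cite{Han} to conclude that $\s$ is trivial, leaving $\g_2=\ka\rtimes\ml$ with $\ml$ semi-simple. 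Without Hano's theorem (or an equivalent substitute such as the positivity of the roots of a normal $j$-algebra), your argument does not close, and the rest of the proof --- the decomposition (4.3) and the quantization dichotomy --- has nothing reductive to stand on. A secondary, minor imprecision: the conclusion is not that the radical is \emph{central} but that $\g_2=\ka\rtimes\ml$ with $\ka$ an abelian ideal on which $\ml$ acts by holomorphic isometries of $\C^k$, so that $M_2$ splits as a K\"ahlerian product $\C^k\times L/K$ even though the Lie algebra need not be a direct product before modification.
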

Note that a homogeneous Sasaki manifold, and more generally
a homogeneous contact manifold is necessarily regular (cf. \cite{BW}, \cite{HK2}).
Note also that Theorem 4.2 may be considered independently as a result on classification of 
homogeneous Sasaki manifolds of unimodular Lie groups, which extends a known result on
compact homogeneous Sasaki manifolds (cf. \cite{BG}). 

\medskip

First, we prove the following key result on homogeneous K\"ahler manifolds of unimodular Lie group,
which could be of independent interest.

\begin{Ps} 

A simply connected homogeneous K\"ahler manifold $M=G/K$ of unimodular Lie group $G$
is of reductive type; that is, the K\"ahler algebra $\{\g, \mk\}$ of $M$ has,
up to modification,  a decomposition
$$\g=\ka \rtimes \ml,$$
where $\ka$ is an abelian K\"ahler ideal of dimension $k$, $\ml$ is a semi-simple K\"ahler subalgebra 
which contains $\mk$. As a K\"ahler manifold, $M$ is a product of $\C^k$ and a homogeneous
K\"ahler manifold $N=L/K$ of a semi-simple Lie group $L$:
$$M = \C^k \times N.$$
Furthermore, $N$ can be decomposed into a K\"ahlerian product of flag manifolds and non-compact
homogeneous K\"ahler manifolds each of which is a holomorphic fiber bundle over
a symmetric domain with fiber a flag manifold.

\end{Ps}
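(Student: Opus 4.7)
The plan is to combine the Dorfmeister--Nakajima structure theorem (the Fundamental Conjecture of Gindikin--Vinberg) recalled in Section~1 with the unimodularity hypothesis on $G$, using Hano's characterization of unimodular K\"ahler algebras in the same spirit as the proof of Theorem~2.1. First, I would apply DN to the K\"ahler algebra $\{\g, \mk\}$: after an admissible modification, one obtains a decomposition
\[
\g = \ka \rtimes \f
\]
as in (1.6), with $\ka$ an abelian K\"ahler ideal and $\f$ a J-subalgebra containing $\mk$. The J-algebra $\f$ carries the fibration (1.7), with a ``normal J-subalgebra'' $\mb \subset \f$ of solvable type acting simply transitively on a homogeneous bounded domain, together with a compact reductive subalgebra $\q \subset \f$ responsible for flag-type fibers.

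Next, I would exploit unimodularity of $\g$. The Koszul form
\[
\kappa(X) = {\rm Tr}_{\g/\mk}\bigl(\ad(JX) - J\,\ad(X)\bigr)
\]
vanishes on any unimodular K\"ahler Lie algebra. Since $\ka$ is abelian, $\kappa$ automatically vanishes on $\ka$, so the constraint $\kappa\equiv 0$ is really a constraint on $\f$. On the normal J-subalgebra $\mb$, however, the Koszul form is non-degenerate (it is essentially the Bergman form of the underlying homogeneous bounded domain), so unimodularity of $\g$ forces $\mb$ to be absorbed, after a further modification, into a semi-simple subalgebra. Invoking Hano's theorem as in the proof of Theorem~2.1---a unimodular K\"ahler algebra with non-degenerate Ricci form must be semi-simple---then yields that $\f$ can be modified into a semi-simple J-algebra $\ml \supseteq \mk$, giving the desired
\[
\g = \ka \rtimes \ml.
\]

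From this Lie-algebraic decomposition the K\"ahler manifold splits as $M = \C^k \times N$, where the flat factor $\C^k$ is the orbit of the abelian K\"ahler ideal $\ka$ (of real dimension $2k$) and $N = L/K$ is the homogeneous K\"ahler manifold of the semi-simple Lie group $L$ corresponding to $\ml$. The further decomposition of $N$ into a K\"ahler product of flag manifolds and non-compact factors, each a holomorphic fiber bundle over a symmetric domain with fiber a flag manifold, follows from the classical structure theorems of Borel, Matsushima and Hano for homogeneous K\"ahler manifolds of semi-simple Lie groups, already referenced in the introduction.

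The main obstacle is the second step: one must show rigorously that the solvable, non-semi-simple part of $\f$---the normal J-subalgebra describing the bounded domain base of the DN fibration---can actually be absorbed into a semi-simple complement by a modification. The subtlety is that ``triviality'' here means absorption modulo a modification rather than literal vanishing; controlling this absorption uses that modifications in the unimodular category are skew-symmetric (condition (1.1) of Section~1) and act by linear maps $\phi : \g \to \mk$ that vanish on $[\g,\g]$, and requires careful tracking of how the Koszul form and the J-structure behave under iterated modifications.
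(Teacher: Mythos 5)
There is a genuine gap, and it sits exactly where you flag ``the main obstacle.'' Your central step rests on the claim that the Koszul form vanishes on any unimodular K\"ahler algebra, and this is false for a K\"ahler algebra $\{\g,\mk\}$ with nontrivial isotropy: a compact flag manifold such as $SU(2)/T$ is a homogeneous K\"ahler manifold of a unimodular (indeed compact) Lie group with positive-definite Ricci form $\mathfrak r(X,Y)=-\kappa([X,Y])$, so $\kappa\not\equiv 0$. What Hano's theorem actually gives is a conclusion about unimodular K\"ahler \emph{Lie groups} (trivial isotropy, and in the relevant application solvable ones), not a pointwise vanishing of $\kappa$ for general unimodular pairs $\{\g,\mk\}$. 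Moreover, even granting some constraint on $\f$, the assertion that the normal solvable $J$-subalgebra $\mb$ of the bounded-domain part ``gets absorbed into a semi-simple complement by a modification'' is precisely the statement that needs proof; modifications as defined in Section~1 use abelian $\mk$ and maps $\phi$ vanishing on $[\g,\g]$, and it is not clear such an operation can convert a non-reductive normal $J$-algebra into a semi-simple one. So the proposal does not close the argument.

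The paper circumvents both difficulties by choosing a different intermediate decomposition. After the Dorfmeister--Nakajima step $\g=\ka\rtimes\f$, it invokes the Vinberg--Gindikin--Piatetskii-Shapiro decomposition $\f=\s\times\q$ with $\s$ a solvable $J$-subalgebra and $\q$ a \emph{reductive} $J$-subalgebra containing $\mk$ (with $Z(\q)\subset\mk$). This is the key difference from your setup: the symmetric-domain factors already live inside the semi-simple part of $\q$, so nothing ever needs to be ``absorbed''; only the genuinely non-reductive solvable piece $\s$ must be eliminated. That elimination is done by observing (via the De Rham decomposition) that $\ka\rtimes\s$ is the radical of $\g$, that the radical of a unimodular Lie algebra is again unimodular (for $X$ in the radical, $\ad X$ induces the zero map on the semi-simple quotient, so its trace on the radical equals its trace on $\g$), and then applying Hano's theorem to the unimodular solvable K\"ahler algebra $\ka\rtimes\s$: it must be meta-abelian and flat, and maximality of the abelian $J$-ideal $\ka$ forces $\s=0$. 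If you want to repair your argument, replace the Koszul-form vanishing claim with this localization of unimodularity to the radical, and replace the Piatetskii-Shapiro normal $J$-algebra picture of the bounded domain with the $\s\times\q$ splitting.
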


\begin{proof}
Let $M=G/K$ be a simply connected homogeneous K\"ahler manifold, where
$G$ is a unimodular Lie group and $K$ its closed subgroup. 
We have a decomposition
$$\g=\ka \rtimes \f,$$
where $\ka$ is a maximal abelian $J$-ideal of $\g$ isomorphic to $\C^k$ 
and $\f$ is a $J$-subalgebra which contains $\mk$. 
Moreover, due to \cite{VGP}, $\f$ decomposes into a product of
a solvable $J$-subalgebra $\s$, a reductive $J$-subalgebra $\q$,
$$\f = \s \times \q,$$
where $\q$ contains $\mk$, and the center of $\q$  is contained in $\mk$.
We see, applying the De Rham decomposition of homogeneous K\"ahler manifolds (cf. \cite{KN}),
that $\s$ is actually the radical of $\f$, which is a maximal solvable ideal of $\f$. We see also that
$\ka \rtimes \s$ is the radical of $\g$. Since $\g$ is by assumption
a unimodular Lie algebra, so is $\ka \rtimes \s$. It follows, due to Hano \cite{Han} that $\s$ must be trivial.
Since the center of $\q$  is contained in $\mk$, we may express $\g$ as
$$\g = \ma \rtimes \ml,$$
where $\ml$ is the semi-simple part of $\q$ and $\mk$ is contained in $\ml$.
Since $M$ is by assumption simply connected, $\ma$ corresponds to $\C^k$ as a flat K\"ahler manifold,
and thus the action of $L$ (the Lie group corresponding to $\ml$)  on $\C^k$ is holomorphically isometric.
Thus as a K\"ahler manifold $M$ is isomorphic to $\C^k \times L/K$ (cf. \cite{DN}), where $L/K$ is a product of
homogeneous K\"ahler manifolds of compact semi-simple Lie groups and homogeneous K\"ahler manifolds
of non-compact semi-simple Lie groups each of which is a holomorphic fiber bundle 
over a symmetric domain with fiber a flag manifold (c.f. \cite{B}).
\qed
\end{proof} 

Next we discuss a quantization of a homogeneous K\"ahler manifold $M_2=G_2/K$ of reductive type.
In case $M_2 = \C^k$, its quantization is the Heisenberg Lie group $N$, which is a central extension of 
$\R$ by $\C^k$. In case $M_2=L/K$ is a flag manifold, being a simply connected Hodge manifold, 
where $L$ is a compact semi-simple Lie group, it is quantizable to a compact 
simply connected homogeneous Sasaki manifold with fiber $S^1$. In case $L$ is a non-compact 
semi-simple Lie group, $M_2$ is a holomorphic fiber bundle over a symmetric domain $L/B$ 
with fiber a flag manifold $B/K$, where $B$ is a maximal compact Lie subgroup of $L$ containing $K$.
Since the flag manifold $B/K$ is a K\"ahler submanifold of $M_2=G/K$ and $S^1$-quantizable, $M_2$
is also $S^1$-quantizable. In general cases, for two or more homogeneous K\"ahler manifolds
each of which is quantizable, we can construct naturally a quantization of their products in the following way.
For two K\"ahler algebras $\g_2$ and $\g_2'$ with their central extension $\g_1$ and $\g_1'$ respectively,
we can define a new central $\R$-extension of $\g_2 \times \g_2'$ by taking $\R \times \R/\Delta = \R$
with $\Delta=\{(X, -X) | X \in \R \}$::
$$ 0 \rightarrow \R \rightarrow \g_1 \times_\Delta \g_1' \rightarrow \g_2 \times \g_2' \rightarrow 0 \eqno(4.4)$$
where  $\g_1 \times_\Delta \g_1' =  (\g_1 \times  \g_1')/\Delta$, the quotient Lie algebra by the canonical action 
of $\Delta$ on $\g_1 \times  \g_1'$. Correspondingly, we obtain a quantization $G_1 \times_\Delta G_1'$ of
$G_2 \times G_2'$; and in general the quantization $G_1/H \times_\Delta G_1'/H'$ of
$G_2/K \times G_2'/K'$. Now, in case $M_2$ is a product of $\C^k$ and a symmetric domain
(which is the case $B=K$),
since $M_2$ is contractible, it is $\R$-quantizable (but not $S^1$-quantizable) to a simply connected
homogeneous Sasaki manifold.
In all other cases, as we have seen in the above, since $L$ is a non-compact 
semi-simple Lie group and $K$ is not a maximal compact subgroup of $L$ (that is, $B \supsetneq K$),
$M_2$ is $S^1$-quantizable to a simply connected homogeneous Sasaki manifold.

\smallskip

This completes the proof of Theorem 4.2, and thus also Theorem 4.1.

\medskip
\noindent {\bf Acknowledgements.} The authors would like to thank Vicente Cort\'es for very useful comments
on an early version of the paper. The authors express special thanks to the anonymous referee
for carefully reading the draft, giving appropriate suggestions to improve readability of the paper.







\end{document}